\newtheorem{theorem}{Theorem}[section]
\newtheorem{corollary}[theorem]{Corollary}
\newtheorem{lemma}[theorem]{Lemma}
\newtheorem{proposition}[theorem]{Proposition}
\theoremstyle{definition}
\newtheorem{conjecture}[theorem]{Conjecture}
\numberwithin{equation}{section}
\def\Z{\mathbb Z}
\begin{document}


\baselineskip=17pt



\title{On dilates sum}
\author {Y. O. Hamidoune}
\address{UPMC, Univ Paris 06,\\ 4 Place Jussieu,
75005 Paris, France.}
\email{hamidoune@math.jussieu.fr}
\author {J. Ru\'e}
\address{LIX, \'Ecole Polytechnique ,  91128 Palaiseau-Cedex, France}
\email{juan.jose.rue@upc.edu}

\thanks{The second author is supported by the European Research Council under
the European Community's 7th Framework Programme, ERC grant agreement no 208471 - ExploreMaps project.}

\maketitle

\begin{abstract} Let $A$ be a finite nonempty set of integers.
An asymptotic estimate of several dilates sum size was obtained by Bukh. The unique known exact
bound concerns the sum $|A+k\cdot A|,$  where $k$ is a prime and $|A|$ is large. In its full generality, this bound
is due to Cilleruelo, Serra and the first author.

Let $k$ be an odd prime and assume that
$|A|>8k^{k}.$ A corollary to our main result states that
$|2\cdot A+k\cdot A|\ge (k+2)|A|-k^2-k+2.$
Notice that  $|2\cdot P+k\cdot P|=(k+2)|P|-2k,$ if $P$ is an arithmetic progression.
 \end{abstract}

\section{Introduction}

Let $A,\ B$ be finite nonempty sets of real numbers. The {\em Minkowski sum} of $A$ and $B$ is defined as
$$A+B=\{a+b \ : \ a\in A\  \mbox{and}\ b\in B\}.$$
The inequality $|A+B|\ge |A|+|B|-1$ is an easy exercise, that we shall use without any reference.
For a real number $r,$ the $r$-\emph{dilate} of $A$ 
is the set $r\cdot A=\{ra : a\in A\}$.  Lower bounds for the size of dilates sums appeared in different contexts. $\L$aba and Konyagin~\cite{laba} investigated the  sum $A+\lambda \cdot A$ (where $\lambda$ is a transcendental  number) in connection with well-distributed planar sets distances.   Dilates sums also appeared in the proofs of sum-product results in finite fields by Garaev~\cite{garaev}, and by Katz and
Shen~\cite{KS}. The sum of two dilates appeared in the work of Nathanson, O'Bryant, Orosz, Ruzsa and Silva on binary linear forms~\cite{NOOR}.  Also, they were used  by Bukh \cite{bukh0} in connection with a problem of Ruzsa.

From now on,   we assume that $A$ is a nonempty set of integers. Dilates sum of the form $A+3\cdot A$ were investigated independently by Bukh \cite{bukh} and  by
Cilleruelo, Silva and  Vinuesa~\cite{javier}. More recently, the authors of~\cite{joy} proved that for an odd prime $k$,  $|A+k\cdot A|\ge (1+k)|A|- (k+1)^2/4$ for $|A|$ sufficiently large.
Let  $m_1, \cdots, m_j$ be integers
with $\gcd(m_1, \dots, m_j)=1$, Bukh proved in~\cite{bukh} that
$$|m_1\cdot A+\dots +m_j\cdot A|\ge (|m_1|+\dots +|m_j|)|A|-{\small o}\left(|A|\right).$$
%


Bukh's result suggests the following:

\begin{conjecture}
For a set of integers $Z$ with $\gcd(Z)=1$ and for every nonempty set of integers $A,$ there is an absolute constant $c$ such that
  $$\left|\sum _{m\in Z} m\cdot A\right|\ge \left(\sum _{m\in Z}|m|\right)|A|-c.$$
  \end{conjecture}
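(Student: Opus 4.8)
The conjecture asks to upgrade Bukh's asymptotic bound, whose error term is $o(|A|)$, to a bound with error term $O(1)$ (depending only on $Z$, not on $A$); equivalently, to prove that arithmetic progressions are, up to an additive constant, the unique minimisers of $|\sum_{m\in Z}m\cdot A|$. My plan rests on a projection (fibering) recursion combined with a Freiman-type dichotomy to pin down the constant.

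The engine is the following recursive inequality. Write $S=\sum_{m\in Z}m\cdot A$, pick one modulus $m_0\in Z$, set $k=|m_0|$ and $T=\sum_{m\in Z\setminus\{m_0\}}m\cdot A$. Since $k\mid m_0$, every element of $m_0\cdot A$ vanishes modulo $k$, so $S\cap(r+k\Z)=T_r+m_0\cdot A$, where $T_r=\{t\in T:t\equiv r\ (\mathrm{mod}\ k)\}$. Writing $T_r=r+k\,T_r'$ and $m_0\cdot A=k\cdot(\pm A)$ one gets $|S\cap(r+k\Z)|=|T_r'\pm A|\ge |T_r|+|A|-1$ from the trivial sumset bound. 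Summing over the residues $r$ actually hit by $T$ gives
$$|S|\ \ge\ |T|+\nu\,(|A|-1),\qquad \nu=\#\{r:\ T_r\neq\emptyset\}.$$
If I can choose $m_0$ so that $Z\setminus\{m_0\}$ is still coprime and $\nu=k$, then induction on $|Z|$ (base case $Z=\{\pm1\}$, where the bound is trivial) closes the argument and the constant accumulates additively, $c(Z)=c(Z\setminus\{m_0\})+k$.

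Two difficulties must be overcome. First, the residue count: one needs $\nu=k$, or to lose only $O(1)$ from $k$. Since $S\bmod k$ equals the image of $\sum_{m\neq m_0}m\cdot\overline{A}$ in $\Z/k\Z$, this is a Cauchy--Davenport/Kneser question that is clean when $k$ is prime but degenerates when $\overline{A}$ lies in a proper subgroup, and that is exactly where an additive constant leaks. Second, and more seriously, the recursion requires a \emph{strippable} modulus, i.e.\ an $m_0$ with $\gcd(Z\setminus\{m_0\})=1$; for minimal coprime configurations such as $Z=\{2,3\}$ or $Z=\{6,10,15\}$ no such $m_0$ exists, because removing any element leaves a common factor. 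These irreducible sets are the genuine base cases and cannot be handled by fibering alone.

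For an irreducible $Z$ I would invoke a Freiman-type dichotomy on $A$. If $A$ is close to an arithmetic progression---say $A\subseteq\{0,d,\dots,(N-1)d\}$ with $N$ within a constant factor of $|A|$---then after rescaling by $d$ one computes $S$ directly: it fills an interval of length $L(N-1)$, where $L=\sum_{m\in Z}|m|$, except for a bounded number of end defects whose count depends only on $Z$, and a $3k-4$-type theorem (together with Vosper's theorem in the tight cases) converts ``$A$ generates a short sumset'' into ``$A$ is a genuine progression'', yielding the exact constant. If instead $A$ is far from a progression, its doubling $|A+A|/|A|$ is bounded away from $1$, and here I would sharpen Bukh's averaging argument to produce a genuine linear surplus $|S|\ge(L+\varepsilon)|A|$ for some $\varepsilon=\varepsilon(Z)>0$, which makes the additive constant irrelevant. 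The main obstacle is precisely this last step: upgrading Bukh's $o(|A|)$ to a quantitative $\Omega(|A|)$ gain, uniformly over all $A$ in the far-from-progression regime and over the finitely many irreducible configurations, is the crux that keeps the conjecture open, and is where the Pl\"unnecke--Ruzsa inequalities and the isoperimetric (atom) method would have to be pushed well beyond their asymptotic form.
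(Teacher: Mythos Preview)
The statement you are attempting is labelled a \emph{Conjecture} in the paper and is not proved there in full generality; the paper establishes only the special case $Z=\{2,k\}$ with $k$ an odd prime, and even that under the extra hypothesis $|A|>8k^{k}$ (Corollary~\ref{main2}). There is therefore no proof in the paper against which to compare your proposal.

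As to the content of your plan: you correctly identify that the fibering recursion stalls on the ``irreducible'' configurations $Z$, and that the Freiman-type dichotomy you propose for those cases hinges on upgrading Bukh's $o(|A|)$ error to a genuine linear surplus $\varepsilon|A|$ in the far-from-progression regime---a step you yourself flag as open. That is an honest assessment, but it means the proposal is a programme, not a proof. Note moreover that the one case the paper \emph{does} settle, $Z=\{2,k\}$, is already irreducible in your sense (removing $2$ leaves $\{k\}$, removing $k$ leaves $\{2\}$), so your recursion contributes nothing even there and you are forced straight into the unresolved dichotomy branch. The paper's argument for $\{2,k\}$ is quite different from your scheme: it decomposes $A$ into its $k$-components $C$, introduces the marginal sets $M_C=(2\cdot C+k\cdot A)\setminus(2\cdot C+k\cdot C)$, and runs an induction on an auxiliary parameter $s$ ranging from $2$ to $k$ (Theorem~\ref{main0}) rather than on $|Z|$, exploiting a structural dichotomy between $k$-semi-full and non-semi-full components instead of a doubling dichotomy on $A$.
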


 In the present work, we prove the above conjecture for $Z=\{2,k\},$ where $k$ is an odd prime. For simplicity, we will not consider
 negative dilates, but the reader will certainly observe that our approach works in this case.


In section 2, we present some easy and known lemmas that we need. In section 3, we prove some intermediary results needed in our induction arguments. One of the results of this section states that
$|n\cdot A+m\cdot A|\ge 4|A|-4$, where $m$ and $n$ are coprime integers. This result is a counterpart of a lemma by Nathanson~\cite{Nathanson} stating that $|A+2\cdot A|\ge 3|A|-2$.
Let $k$ be an odd prime. By a $k$-component of a set $X\subset \Z,$ we shall mean the trace on $X$ of some congruence classes modulo
$k$. In section 4, we investigate the marginal set $(2\cdot C+k\cdot A)\setminus (2\cdot C+k\cdot C),$
where $C$ is a $k$-component of $A$. In section 5, we prove that $|2\cdot A+k\cdot A|\ge (k+2)|A|-4k^{k-1}.$

Assuming that $0\in A,$  $\gcd(A)=1$,  $|A|>8k^{k}$ and that $A$ has a $k$-component involving at most $k-1$   $k^2$-components, we show that
$$|2\cdot A+k\cdot A|> (k+2)|A|.$$
 Readers interested in  the description of  sets reaching equality could quite likely  use this result, since it shows that the objective function $|2\cdot A+k\cdot A|$ achieves its minimum on   structured sets, for $|A|$ large.  Let $X$ be a finite set of integers with
$|X|>8k^{k}.$ We conclude Section 5 by an  easy consequence of our
  main result,   stating  that
$|2\cdot X+k\cdot X|\ge (k+2)|X|-k^2-k+2.$ As an exercise, the reader could prove that
 $|2\cdot P+k\cdot P|=(k+2)|P|-2k,$ if $P$ is an arithmetic progression. Observe that for $k=3,$ our bound differs at most by $4$ from the best possible one.

\section{Preliminaries and terminology}

In this paper, we consider sums of dilates of a finite set of integers.
 The next known lemma shows that the size of a dilates sum remains invariant if we replace $A$ by an affine transform of it.

\begin{lemma}~\cite{bukh} Let $A$ be a finite set of integers and let $r,s,u,v$ be non-zero integers. Then
  \begin{itemize}
    \item $|r\cdot ( A+v)+s\cdot(A+v)|= |r\cdot  A+s\cdot A|,$
    \item $|r\cdot (u\cdot A)+s\cdot(u\cdot A)|= |r\cdot  A+s\cdot A|.$
  \end{itemize}
\label{prehistorical}
\end{lemma}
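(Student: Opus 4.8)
The plan is to reduce both identities to the elementary observation that, on $\Z$, a translation $x\mapsto x+c$ and a nonzero homothety $x\mapsto ux$ (with $u\in\Z\setminus\{0\}$) are injective maps, hence carry any finite set to a set of the same cardinality. With that in hand each bullet becomes a one-line set-identity computation.

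For the first bullet, I would expand the Minkowski sum directly:
$$r\cdot(A+v)+s\cdot(A+v)=\{\,r(a+v)+s(a'+v)\ :\ a,a'\in A\,\}=\{\,ra+sa'\ :\ a,a'\in A\,\}+(r+s)v=(r\cdot A+s\cdot A)+(r+s)v.$$
Since translation by the fixed integer $(r+s)v$ is a bijection of $\Z$, the left-hand set and $r\cdot A+s\cdot A$ have equal size. For the second bullet I would compute similarly
$$r\cdot(u\cdot A)+s\cdot(u\cdot A)=\{\,r(ua)+s(ua')\ :\ a,a'\in A\,\}=u\cdot\{\,ra+sa'\ :\ a,a'\in A\,\}=u\cdot(r\cdot A+s\cdot A),$$
and since $u\neq0$ the map $x\mapsto ux$ is injective on $\Z$, so again the cardinalities agree.

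There is essentially no obstacle here: the statement is a bookkeeping lemma whose only content is that affine maps with nonzero linear part are injective. The sole point to keep straight is that, when one writes $r\cdot(A+v)+s\cdot(A+v)$, the two copies of $A$ are sums over \emph{independent} elements $a,a'\in A$ (not a single shared element), so that the bracketed set in each display is literally $r\cdot A+s\cdot A$; once this is written out carefully the equalities are immediate. \qfd
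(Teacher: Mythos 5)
Your proof is correct and follows essentially the same route as the paper: expand the sum to identify it as a translate by $(r+s)v$ in the first case and a dilate by $u$ in the second, then invoke injectivity of these affine maps to conclude equality of cardinalities. Nothing further is needed.
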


\begin{proof}
 We have clearly
\begin{align*}
|r\cdot ( A+v)+s\cdot( A+v)|&= |r\cdot A+s\cdot A+(rv+sv)|
\\ &= |r\cdot A+s\cdot A|.
\end{align*}
as $|A+w|=|A|$. We also have
\begin{align*}
|r\cdot (u\cdot A)+s\cdot(u\cdot A)|&=
| (ru)\cdot A+(su)\cdot A|\\ &= |u\cdot (r\cdot A)+u\cdot (s\cdot A)|
\\ &= |r\cdot A+s\cdot A|.\end{align*}
\end{proof}

Let $A$ be a finite set of integers. The intersection of  $A$  with a congruence class modulo $n$ will be called a $n$-{\em component}. By a {\em decomposition} modulo $n,$ we mean a partition of $A$ into its $n$-components.
The number of $n$-components of $A$ will be denoted by $c_n(A).$
We shall say that $A$ is {\em $n$-full} if $c_n(A)=n$. The set $A$ is $n$-{\em semi-full} if every $n$-component $C$ of $A$ satisfies  $c_{n^2} (C)=n.$

\begin{lemma} \label{prefull}If $A$ is $n$-full, then $\gcd(A)$ is coprime to $n$. Moreover, $\frac{1}{\gcd(A)}\cdot A$ is $n$-full.
  \end{lemma}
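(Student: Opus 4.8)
The plan is to argue the first claim by contradiction and then deduce the second from the invertibility of multiplication modulo $n$. Write $d=\gcd(A)$. Suppose, for contradiction, that $d$ and $n$ share a common prime factor $p$. Then every element of $A$ is divisible by $p$, and since $p\mid n$ each such element is congruent modulo $n$ to one of the $n/p$ residues divisible by $p$. Hence $c_n(A)\le n/p<n$, contradicting the hypothesis that $A$ is $n$-full. Therefore $\gcd(d,n)=1$.

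For the second assertion, set $B=\frac{1}{d}\cdot A=\{a/d : a\in A\}$, noting that each $a/d$ is an integer because $d\mid a$. Since $\gcd(d,n)=1$, the integer $d$ is invertible modulo $n$; let $d'$ be an inverse. Then $a/d\equiv a\,d'\pmod n$ for every $a\in A$, so the set of residues of $B$ modulo $n$ is the image under multiplication by $d'$ of the set of residues of $A$. As multiplication by $d'$ is a bijection of $\Z/n\Z$ and $A$ meets all $n$ residue classes, $B$ meets all $n$ residue classes as well, i.e.\ $B$ is $n$-full.

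The argument is completely elementary, so I do not expect a genuine obstacle. The one place deserving a moment of care is the step ``every element divisible by $p$'' $\Rightarrow$ ``at most $n/p$ residue classes occupied'', which uses $p\mid n$ — but that is exactly what the contradiction hypothesis supplies. An alternative, slightly slicker presentation would handle both parts at once via the observation that the image of $d\,\Z$ under the projection $\Z\to\Z/n\Z$ has cardinality $n/\gcd(d,n)$: being $n$-full forces this to equal $n$, giving $\gcd(d,n)=1$, and the same projection identity applied to $\frac{1}{d}\cdot A$ yields fullness of the dilate.
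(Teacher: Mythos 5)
Your proposal is correct and follows essentially the same route as the paper: both parts rest on the observation that $\gcd(A)$ must be invertible modulo $n$, so that dividing by it permutes the residue classes and preserves $c_n$. The only cosmetic difference is in the first part, where the paper directly picks an element $u\in A$ with $u\equiv 1\pmod n$ and notes $\gcd(\gcd(A),n)\mid\gcd(u,n)=1$, while you argue by contradiction through a common prime factor; both are one-line elementary arguments.
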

  \begin{proof} There is an  $u\in A$ such that $u \equiv 1 \pmod n.$ As $\gcd(A)$ divides $u$, then $\gcd(\gcd(A),n)$ divides both $u$ and $n$, hence it divides $1$. Additionally, $\gcd(A)$ is invertible modulo $n$, and thus $c_{n}\left(\frac{1}{\gcd(A)}\cdot A\right)=c_{n}(A).$
    \end{proof}

\section{Tools}

 \begin{lemma} Let $A$ and $B$ be finite sets of integers and let $m, n$ be coprime integers. Let ${\mathcal C}$ denote the
  set of $m$-components of $A.$ Then $n\cdot A+m\cdot B=\bigcup _{C\in {\mathcal C}}
   n\cdot C+m\cdot B$ is a decomposition modulo $m.$
\label{bigcomp}
 \end{lemma}
 \begin{proof} Clearly $n\cdot C+m\cdot B \equiv n\cdot C \pmod m,$  for any  $C\in {\mathcal C}.$
 The result follows now since $n\cdot C$ and $n\cdot T$ are necessarily incongruent modulo $m$, for distinct components
 $C,T\in {\mathcal C}.$ \end{proof}

The next lemma is basic in our approach.
  \begin{proposition} Let $A$ and $B$ be finite sets of integers and let $m, n$ be coprime integers. Then $|n\cdot A+m\cdot B|\ge c_n(B)|A|+c_m(A)|B|-c_m(A)c_n(B).$
\label{basic}
 \end{proposition}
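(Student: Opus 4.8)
The plan is to combine the two structural facts we already have: Lemma~\ref{bigcomp}, which says that decomposing $A$ into its $m$-components $\mathcal{C}=\{C_1,\dots,C_{c_m(A)}\}$ yields a decomposition of $n\cdot A+m\cdot B$ modulo $m$, and the symmetric statement obtained by decomposing $B$ into its $n$-components modulo $n$. First I would fix notation: write $c_m(A)=s$ and $c_n(B)=t$, let $C_1,\dots,C_s$ be the $m$-components of $A$, and let $D_1,\dots,D_t$ be the $n$-components of $B$. Since $\gcd(m,n)=1$, the sets $n\cdot C_i+m\cdot B$ lie in distinct residue classes modulo $m$, so $|n\cdot A+m\cdot B|=\sum_{i=1}^{s}|n\cdot C_i+m\cdot B|$, and dually, applying Lemma~\ref{bigcomp} with the roles of the two dilates swapped, $|n\cdot C_i+m\cdot B|=\sum_{j=1}^{t}|n\cdot C_i+m\cdot D_j|$. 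Hence
\[
|n\cdot A+m\cdot B|=\sum_{i=1}^{s}\sum_{j=1}^{t}|n\cdot C_i+m\cdot D_j|.
\]

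Next I would bound each term $|n\cdot C_i+m\cdot D_j|$ from below. Since $n$ is a nonzero integer, $|n\cdot C_i|=|C_i|$, and likewise $|m\cdot D_j|=|D_j|$; applying the trivial sumset bound $|X+Y|\ge |X|+|Y|-1$ gives $|n\cdot C_i+m\cdot D_j|\ge |C_i|+|D_j|-1$. Summing over all $i$ and $j$ and using $\sum_i|C_i|=|A|$ and $\sum_j|D_j|=|B|$, I get
\[
|n\cdot A+m\cdot B|\ \ge\ \sum_{i=1}^{s}\sum_{j=1}^{t}\bigl(|C_i|+|D_j|-1\bigr)=t|A|+s|B|-st,
\]
which is exactly $c_n(B)|A|+c_m(A)|B|-c_m(A)c_n(B)$, as claimed.

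I do not expect any serious obstacle here; the one point that needs a little care is justifying the double decomposition, i.e. that after splitting off an $m$-component $C_i$ one may further split $n\cdot C_i+m\cdot B$ along the $n$-components of $B$ via a second application of Lemma~\ref{bigcomp} (this is legitimate because $n\cdot C_i+m\cdot D_j \equiv m\cdot D_j \pmod n$ and the $m\cdot D_j$ are pairwise incongruent mod $n$). A cosmetic subtlety is the degenerate case where some component is empty, but a $k$-component by definition is the nonempty trace on a residue class in the standard convention, or one simply notes the empty terms contribute $0$ to both sides; either way the inequality is unaffected. The whole argument is just ``decompose twice, then apply $|X+Y|\ge|X|+|Y|-1$ cellwise.''
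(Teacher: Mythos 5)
Your proof is correct and follows essentially the same route as the paper: partition $n\cdot A+m\cdot B$ into the $c_m(A)\,c_n(B)$ pairwise disjoint cells $n\cdot C_i+m\cdot D_j$ and apply $|X+Y|\ge|X|+|Y|-1$ to each. The only cosmetic difference is that you obtain the disjointness of the cells by applying Lemma~\ref{bigcomp} twice (once modulo $m$, once modulo $n$), whereas the paper verifies it directly from the equation $na_1+mb_1=na_2+mb_2$ and coprimality; both are valid.
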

 \begin{proof}

Let ${\mathcal A}$ be the set $m$-components  of $A$ and let ${\mathcal B}$ be the set $n$-components of $B.$
We claim that if $M_1,M_2\in  {\mathcal A}$ and $N_1,N_2\in  {\mathcal B}$ such that $(M_1,N_1)\neq (M_2,N_2)$, then $(n\cdot M_1+m\cdot N_1)\cap (n\cdot M_2+m\cdot N_2)= \emptyset.$ Suppose the contrary and take $a_i\in M_i$ and $b_i\in N_i,$ $1\le i \le 2$ with
$na_1+mb_1=na_2+mb_2.$ Thus, $n(a_1-a_2)=m(b_1-b_2).$
Since $m$ is coprime to $n,$ we have $b_1-b_2\equiv 0 \pmod n$ and $a_1-a_2\equiv 0 \pmod m.$
In particular, $M_1=M_2$ and $N_1=N_2$, a contradiction.

Therefore, using Lemma~\ref{bigcomp} we have
\begin{align*}
|n\cdot A+ m\cdot B|&=\left|\bigcup_{M\in {\mathcal A} ;\ N\in {\mathcal B}} n\cdot M+m\cdot N\right|
\ge \sum_{M\in {\mathcal A} ;\ N\in {\mathcal B}}  |M|+|N|-1\\
&=  c_n(B)|A|+c_m(A)|B|-c_m(A)c_n(B).
\end{align*}
\end{proof}

\begin{corollary} Let $2\leq n< m$ be coprime integers. Let $A$ be a finite set of integers. Then
$|n\cdot A+m\cdot A|\ge 4|A|-4.$
\label{nathanson}
 \end{corollary}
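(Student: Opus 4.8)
The plan is to apply Proposition~\ref{basic} with $B=A$, which gives the general lower bound
$$|n\cdot A+m\cdot A|\ge \bigl(c_n(A)+c_m(A)\bigr)|A|-c_m(A)\,c_n(A),$$
and then argue that the quantity $\bigl(c_n(A)+c_m(A)\bigr)|A|-c_m(A)\,c_n(A)$ is at least $4|A|-4$ whenever $m,n$ are coprime with $2\le n<m$. So the whole matter reduces to a short combinatorial estimate on the two integers $c_n(A),c_m(A)\in\{1,2,3,\dots\}$.

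The key case distinction will be on whether both $c_n(A)$ and $c_m(A)$ equal $1$. First suppose $c_n(A)\ge 2$ and $c_m(A)\ge 2$. Writing $x=c_n(A)$, $y=c_m(A)$, I want $(x+y)|A|-xy\ge 4|A|-4$, i.e. $(x+y-4)|A|\ge xy-4$. When $x+y\ge 4$ this is immediate from $|A|\ge x$ (since $A$ has $x$ distinct $n$-components it has at least $x$ elements, so $|A|\ge\max(x,y)$), giving $(x+y-4)|A|\ge (x+y-4)\max(x,y)\ge xy-4$ after a line of arithmetic; the only subcase with $x+y<4$ here is $x=y=2$, giving $x+y=4$ and $xy-4=0$, so the inequality holds with equality-type slack. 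Thus in this regime $|n\cdot A+m\cdot A|\ge 4|A|-4$.

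The remaining case is $c_n(A)=1$ or $c_m(A)=1$; say $c_n(A)=1$, so $A$ lies in a single residue class mod $n$, i.e. $A\subset a_0+n\Z$. By Lemma~\ref{prehistorical} (translating by $-a_0$ and then dilating by $1/n$, using that the relevant quantities are invariant under affine maps) we may replace $A$ by $A'=\frac{1}{n}(A-a_0)$ without changing $|n\cdot A+m\cdot A|$; but then $n\cdot A+m\cdot A$ is an affine image of $n^2\cdot A'+m\cdot A'$ — more cleanly, one reduces to showing $|n'\cdot A'+m'\cdot A'|\ge 4|A'|-4$ for a new coprime pair. The cleanest route is: if $c_n(A)=1$ and $c_m(A)=1$ simultaneously, iterate this reduction; it must terminate (the set stays the same size but we can only be in a single class modulo finitely many relevant moduli before $\gcd$ forces a component count $\ge 2$), reducing to the previous paragraph. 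If instead exactly one count is $1$, say $c_n(A)=1<c_m(A)$, then after the affine reduction we land in the case $c_{n'}(A')\ge 2$; if $c_{m'}(A')$ is still $1$ we repeat. I expect the main obstacle to be organizing this descent argument cleanly — in particular verifying that after normalizing so $\gcd(A)=1$ (which is harmless by Lemma~\ref{prehistorical}), one cannot have $c_n(A)=c_m(A)=1$, because $A\subset a_0+mn\Z$ would then force $\gcd(A)\ge$ something incompatible with $\gcd(A)=1$ unless $|A|=1$, and the case $|A|=1$ is trivial since $4|A|-4=0$. Granting that, exactly one of the two counts can be $1$, and the single affine reduction lands us squarely in the two-components-each regime handled by the arithmetic above.
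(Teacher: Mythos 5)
Your first case is exactly the paper's core computation and is correct: when $x=c_n(A)\ge 2$ and $y=c_m(A)\ge 2$, Proposition~\ref{basic} together with $|A|\ge\max(x,y)$ gives $(x+y-4)|A|-xy+4\ge(\max(x,y)-2)^2\ge 0$. The genuine gap is in the case where a component count equals $1$, and the specific step that fails is your claim that normalizing $\gcd(A)=1$ forbids $c_n(A)=c_m(A)=1$ (or even one of them being $1$). It does not: $A=\{1,3,7\}$ with $n=2$, $m=3$ has $\gcd(A)=1$ and $c_2(A)=1$, and here Proposition~\ref{basic} yields only $(1+2)\cdot 3-2=7$, whereas $4|A|-4=8$ is the actual value of $|2\cdot A+3\cdot A|$; so the corollary is tight in this case and the component counts really must be pushed up to $2$ before the proposition is invoked. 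The correct normalization is $0\in A$ \emph{together with} $\gcd(A)=1$, i.e.\ gcd of differences equal to $1$; both are available from Lemma~\ref{prehistorical} by translating by $-\min(A)$ before dividing by the gcd. Then $c_n(A)=1$ would put all of $A$ in the class of $0$ modulo $n$, forcing $n\mid\gcd(A)=1$, impossible for $n\ge 2$; likewise for $m$. With that one observation your first case already covers every $A$ with $|A|\ge 2$ and no descent is needed.

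The descent you sketch as a fallback is salvageable but not closed as written. If $A=a_0+n\cdot A'$ then $n\cdot A+m\cdot A=(n+m)a_0+n\cdot(n\cdot A'+m\cdot A')$, so $|n\cdot A+m\cdot A|=|n\cdot A'+m\cdot A'|$ with the \emph{same} pair $(n,m)$ --- not $n^2$ against $m$, and no new coprime pair appears. Moreover a single reduction need not land you in the two-components regime ($A=\{1,1+n^2\}$ reduces to $\{0,n\}$, still with $c_n=1$), so you must iterate, and your stated termination reason (``gcd forces a component count $\ge 2$'') is the same unsupported claim as above; the honest decreasing quantity is $\max(A)-\min(A)$, which is divided by $n$ or $m$ at each step. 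For comparison, the paper's own proof divides by $\gcd(A)$ without translating and then asserts $\max(c_m,c_n)\ge 2$, so it stumbles on the same example $\{1,3,7\}$; your instinct that this case requires a separate argument is sound --- what is missing on both sides is simply the preliminary translation.
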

 \begin{proof}
 The result holds clearly if $|A|=1.$ Assume that $|A|\ge 2.$
Put $B=\frac{1}{\gcd(A)}\cdot A$. 
Put $r=c_m(A)$ and $s=c_n(A).$ Without loss of generality we may assume that $r\ge s.$ Observe that that $2\le r \le |A|$.
By Lemma~\ref{prehistorical} and Proposition \ref{basic}, we have \begin{align*}
|n\cdot A+m\cdot A|&=|n\cdot B+m\cdot B|\\
&\ge 4|B|-4+(r+s-4)|B|-rs+4\\
&\ge 4|B|-4+(r+s-4)r-rs+4\\
&\ge 4|B|-4+(r-2)^2\ge 4|A|-4.\end{align*}
\end{proof}

  \begin{corollary} Let $m$ be an odd integer. Let $A$ be a $m$-full finite set of integers. Then
$|2\cdot A+m\cdot A|\ge (m+2)|A|-2m.$
In particular,
$|2\cdot A+m\cdot A|\ge (m+2)|A|-2mc_m(A),$  if $A$ is $m$-semi-full.
\label{subfull1}
 \end{corollary}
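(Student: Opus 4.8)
The plan is to apply Proposition~\ref{basic} with $n=2$, $m$ odd, and $B=A$, exploiting the fullness hypothesis. Since $A$ is $m$-full we have $c_m(A)=m$; set $s=c_2(A)$, so $s\in\{1,2\}$. Proposition~\ref{basic} then gives
\[
|2\cdot A+m\cdot A|\ge s|A|+m|A|-ms=(m+s)|A|-ms.
\]
The case $s=2$ is immediate: it yields $(m+2)|A|-2m$, which is exactly the claimed bound. So the real content is the case $s=1$, i.e.\ when $A$ lies in a single residue class mod~$2$.

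In the case $s=1$, the above only gives $(m+1)|A|-m$, which is too weak by roughly $|A|$. To recover the missing $|A|$ I would exploit that $A$ occupies a single class mod $2$ but all $m$ classes mod $m$, so by affine invariance (Lemma~\ref{prehistorical}) I may translate and scale. First, since $A$ is $m$-full, $\gcd(A)$ is coprime to $m$ (Lemma~\ref{prefull}), and replacing $A$ by $\tfrac1{\gcd(A)}\cdot A$ preserves both $|2\cdot A+m\cdot A|$ and $m$-fullness; this makes $\gcd(A)=1$, so $A$ cannot be contained in a single class mod~$2$ unless... — wait, that's the point: $\gcd(A)=1$ forces $c_2(A)=2$ directly. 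Actually the cleaner route: by Lemma~\ref{prehistorical} we may assume $\gcd(A)=1$ without changing either side, and then $c_2(A)=2$ automatically (if $A$ were inside one class mod~$2$, then $2\mid\gcd(A)$), so $s=2$ always after normalization and the bound $(m+2)|A|-2m$ follows from the displayed inequality. I expect the main subtlety is checking that $m$-fullness is genuinely preserved under division by $\gcd(A)$ — but this is precisely the second assertion of Lemma~\ref{prefull}.

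For the "in particular" clause, suppose $A$ is $m$-semi-full, and let $\mathcal C$ be its set of $m$-components, so $|\mathcal C|=c_m(A)$. Each component $C$ satisfies $c_{m^2}(C)=m$, which (viewing $C$ inside its residue class mod~$m$, which is an arithmetic progression of common difference $m$, hence an affine copy of $\Z$) means $C$ is $m$-full as a subset of that progression; applying the already-proved first part to each $C$ gives $|2\cdot C+m\cdot C|\ge(m+2)|C|-2m$. Now I would argue that the sets $2\cdot C+m\cdot C$, for distinct $C\in\mathcal C$, lie in distinct residue classes mod~$m$ — since $2\cdot C+m\cdot C\equiv 2\cdot C\pmod m$ and $2$ is invertible mod~$m$, distinct $C$ give distinct classes — so
\[
|2\cdot A+m\cdot A|\ge\Bigl|\bigcup_{C\in\mathcal C}(2\cdot C+m\cdot C)\Bigr|=\sum_{C\in\mathcal C}|2\cdot C+m\cdot C|\ge\sum_{C\in\mathcal C}\bigl((m+2)|C|-2m\bigr)=(m+2)|A|-2m\,c_m(A).
\]
The one place needing care is the reduction of each $m$-component to the $m$-full case: a single $m$-component $C$ is contained in a coset $a+m\Z$, and the map $x\mapsto(x-a)/m$ is an affine bijection onto a set $C'\subseteq\Z$ with $c_m(C')=c_{m^2}(C)=m$; by Lemma~\ref{prehistorical}, $|2\cdot C+m\cdot C|=|2\cdot C'+m\cdot C'|$, and then the first part applies to $C'$. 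This is the main obstacle only in the bookkeeping sense; no new idea beyond affine invariance is required.
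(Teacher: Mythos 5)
Your overall strategy coincides with the paper's: Proposition~\ref{basic} supplies the first part, and the second part follows by decomposing $A$ into its $m$-components, affinely transforming each one into an $m$-full set so that the first part applies, and using disjointness of the pieces modulo $m$ (Lemma~\ref{bigcomp}). The second half of your argument, including the reduction $x\mapsto (x-a)/m$ of each component to the $m$-full case, is correct as written.

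There is, however, a genuine error in your treatment of the case $c_2(A)=1$. You claim that after replacing $A$ by $\frac{1}{\gcd(A)}\cdot A$ one has $c_2(A)=2$ ``automatically,'' justified by the assertion that if $A$ lies in a single class modulo $2$ then $2\mid\gcd(A)$. That assertion is false when the single class is the odd one: the set $A=\{1,3,5\}$ is $3$-full, has $\gcd(A)=1$, and satisfies $c_2(A)=1$. So your normalization does not rule out $s=1$, and for such a set Proposition~\ref{basic} only yields $(m+1)|A|-m$, which falls short of the claim by $|A|-m$. The repair is to normalize by a translation as well as a dilation: first translate $A$ so that $0\in A$ (this preserves $m$-fullness and, by Lemma~\ref{prehistorical}, the quantity $|2\cdot A+m\cdot A|$), and only then divide by $\gcd(A)$, which is coprime to $m$ by Lemma~\ref{prefull} and hence preserves $m$-fullness. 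After both steps $0\in A$ and $\gcd(A)=1$, so $A$ contains the even element $0$ and at least one odd element; thus $c_2(A)=2$ and your displayed inequality gives $(m+2)|A|-2m$. (The paper dismisses the first part as a ``direct consequence'' of Proposition~\ref{basic} without addressing this point, so you were right to sense that something extra is needed; you just resolved it incorrectly.)
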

 \begin{proof}
 The first part is a direct consequence of Proposition~\ref{basic}. For the second part, take a $m$-decomposition of $A$, namely  $A=\bigcup _{i\in I} A_i$.
 Take an arbitrary element  $i\in I.$ Since $A$ is $m$-semi-full,  $A_i$ can be affinely
 transformed into an $m$-full subset, by Lemma \ref{prefull}. By the first part of this corollary, $|2\cdot A_i+m\cdot A_i|\ge (m+2)|A_i|-2m.$
 By Lemma~\ref{bigcomp}, $2\cdot A_i+m\cdot A_i$ and $2\cdot A_j+m\cdot A_j$ belong to different congruence classes  modulo $m$ for $i\neq j$.
   By Lemma~\ref{bigcomp}, $|2\cdot A+m\cdot A|\ge \sum _{ i \in I}|2\cdot A_i+m\cdot A_i|\ge \sum _{ i \in I} (m+2)|A_i|-2m=(m+2)|A|-2mc_{m}(A).$
\end{proof}

\section{Dilates sum size}

Let $A$ be a finite set of integers and let $C$ be a component of $A.$
  The $C$-{\em marginal} set is defined as $$M_{C}=(2\cdot C+k\cdot A)\setminus (2\cdot C+k\cdot C).$$

 We start by proving a bound for marginal sets size in the semi-full case.

 Let $A$ be a finite set of integers and let ${\mathcal C}$ denotes the set of $k$-components of $A.$
 We shall denote by $\Gamma$ the graph of the order relation $x<y$ defined on the set $L=\{\min (C) : C\in {\mathcal C}\}.$ Recall that for a given element $x\in L$, $\Gamma(x)=\{y\in L : y> x\}$ and $\Gamma^{-}(x)=\{y\in L : y<x\}$.
 For any component $C\in {\mathcal C},$ we shall write ${M_C}^-=\{x\in M_C : x<\min (2\cdot C+k\cdot C)\}$ and
  $M_C^+=\{x\in M_C : x>\max (2\cdot C+k\cdot C)\}$. With the above notations, we formulate the next lemma.

\begin{lemma}
\label{order}
$\sum _{C\in {\mathcal C}} |M_C|\ge  (|{\mathcal C}|-1)| {\mathcal C}|.$
 \end{lemma}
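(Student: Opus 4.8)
The plan is to bound each marginal set $M_C$ from below in terms of how many other components lie, on the real line, to the left of $C$ (for the $M_C^-$ part) or to the right of $C$ (for the $M_C^+$ part), and then sum. Write ${\mathcal C}=\{C_1,\dots,C_t\}$ with $t=|{\mathcal C}|$, ordered so that $\min(C_1)<\min(C_2)<\dots<\min(C_t)$; equivalently order the set $L$ increasingly. The claim will follow if I can show $|M_C|\ge |\Gamma(\min C)|+|\Gamma^-(\min C)|$ for every $C$, since then $\sum_C|M_C|\ge\sum_{j=1}^t\big((t-j)+(j-1)\big)=t(t-1)$, which is exactly $(|{\mathcal C}|-1)|{\mathcal C}|$. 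So the real content is a \emph{per-component} estimate: a component with $a$ components above it and $b=t-1-a$ components below it contributes at least $a+b=t-1$ elements to its marginal set, split as $|M_C^+|\ge a$ and $|M_C^-|\ge b$.

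The key step is the following observation about $M_C^+$ (the argument for $M_C^-$ is symmetric). Fix $C=C_i$ and pick $a^*\in C$ with $2a^*=\max(2\cdot C)$, i.e.\ $a^*=\max C$. For each component $C_j$ with $\min(C_j)>\min(C_i)$, I want to produce an element of $2\cdot C_i+k\cdot C_j$ that exceeds $\max(2\cdot C_i+k\cdot C_i)$ and that, moreover, is \emph{not} in $2\cdot C_i+k\cdot C_i$ — so it lies in $M_{C_i}^+$. The natural candidate is $2\max(C_i)+k\,x$ for a suitable $x\in C_j$; since $\max(2\cdot C_i+k\cdot C_i)=2\max(C_i)+k\max(C_i)$, it suffices to find $x\in C_j$ with $x>\max(C_i)$, and then $2\max(C_i)+kx>\max(2\cdot C_i+k\cdot C_i)$, so in particular it is not in $2\cdot C_i+k\cdot C_i$. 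Such an $x$ exists as soon as $C_j$ has an element above $\max(C_i)$. The subtlety is that "$\min(C_j)>\min(C_i)$'' does not by itself force $\max(C_j)>\max(C_i)$; a component can start later but be contained in a shorter interval. I therefore expect the actual argument to be run on $L=\{\min(C):C\in{\mathcal C}\}$ directly, using the \emph{minima} rather than the maxima: one shows $2\min(C_i)+k\min(C_j)$ (or a nearby element) lands in $M_{C_i}^-$ whenever $\min(C_j)<\min(C_i)$, because $\min(2\cdot C_i+k\cdot C_i)=2\min(C_i)+k\min(C_i)>2\min(C_i)+k\min(C_j)$, and these elements are distinct for distinct $j$ since $k\min(C_j)$ are incongruent... no, they are simply distinct as real numbers because the $\min(C_j)$ are. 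That gives $|M_{C_i}^-|\ge|\Gamma^-(\min C_i)|$ cleanly; the symmetric bound $|M_{C_i}^+|\ge|\Gamma(\min C_i)|$ is the one needing the maxima, and that is where I expect to have to work: either use that each component, after the affine reductions, may be assumed to contain both a small and a large representative, or re-index by maxima and observe the two orderings agree "enough'' for the counting.

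Granting the two per-component inequalities, I would finish by noting that $M_C^-$ and $M_C^+$ are disjoint (one lies strictly below, the other strictly above $2\cdot C+k\cdot C$), so $|M_C|\ge|M_C^-|+|M_C^+|\ge|\Gamma^-(\min C)|+|\Gamma(\min C)|$, and then sum over $C\in{\mathcal C}$: since $\Gamma$ is the comparability graph of a total order on the $t$-element set $L$, we have $\sum_{x\in L}|\Gamma(x)|=\sum_{x\in L}|\Gamma^-(x)|=\binom{t}{2}$, whence $\sum_C|M_C|\ge 2\binom{t}{2}=t(t-1)=(|{\mathcal C}|-1)|{\mathcal C}|$. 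The main obstacle, to repeat, is confirming that each of the $t-1$ "other'' components genuinely contributes a \emph{new} element to one of the two one-sided marginal pieces — i.e.\ controlling the interaction between the left/right position of a component and whether the produced sum actually escapes the block $2\cdot C+k\cdot C$; I would handle it by working consistently with $L$ and the order graph $\Gamma$ as set up in the lemma's preamble, rather than with the raw intervals.
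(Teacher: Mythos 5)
Your treatment of the $M_C^-$ side is correct and is exactly the paper's: for each $C'$ with $\min C'<\min C$, the number $2\min C+k\min C'$ lies in $2\cdot C+k\cdot A$, is smaller than $\min(2\cdot C+k\cdot C)=(2+k)\min C$, and distinct $C'$ give distinct numbers, so $|M_C^-|\ge|\Gamma^-(\min C)|$. The gap is on the $M_C^+$ side: the per-component inequality $|M_C^+|\ge|\Gamma(\min C)|$, which you explicitly ``grant'' before summing, is false in general. Take $C_1=\{0,100k\}$ and $C_2=\{1\}$: then $\min C_1<\min C_2$, so $|\Gamma(\min C_1)|=1$, yet $\max(2\cdot C_1+k\cdot A)=2\max C_1+k\max A=\max(2\cdot C_1+k\cdot C_1)$, so $M_{C_1}^+=\emptyset$. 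More generally, whenever the global maximum of $A$ lies in $C$, one has $M_C^+=\emptyset$ no matter how many components start to the right of $\min C$; so no argument aiming at a per-component bound $|M_C|\ge|{\mathcal C}|-1$ indexed by the order of the minima can work, and your proof as written does not go through.

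The repair is the ``re-index by maxima'' idea you mention but do not carry out, and it needs no agreement whatsoever between the two orderings, because the lemma only requires the \emph{summed} bound. For each $C$ and each $C'$ with $\max C'>\max C$, the number $2\max C+k\max C'$ lies in $2\cdot C+k\cdot A$, exceeds $\max(2\cdot C+k\cdot C)=(2+k)\max C$, and distinct $C'$ give distinct numbers; hence $|M_C^+|\ge\#\{C'\in{\mathcal C} : \max C'>\max C\}$. Since the components are pairwise disjoint, their maxima are pairwise distinct, so summing this over $C\in{\mathcal C}$ counts every unordered pair of components exactly once and gives $\sum_C|M_C^+|\ge\binom{|{\mathcal C}|}{2}$; your minima argument gives $\sum_C|M_C^-|\ge\binom{|{\mathcal C}|}{2}$; and since $M_C^-$ and $M_C^+$ are disjoint, $\sum_C|M_C|\ge 2\binom{|{\mathcal C}|}{2}=(|{\mathcal C}|-1)|{\mathcal C}|$. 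This is precisely how the paper argues (its $|M_C^+|$ bound is stated via $\Gamma(\max(C))$, i.e.\ via the maxima): the pairs contributing to the minus side and those contributing to the plus side may pair up components differently, but each family of pairs has total size $\binom{|{\mathcal C}|}{2}$, which is all the count requires.
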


 \begin{proof}
Clearly $$2\cdot \Gamma ^-(\min (C))+k\cdot \min (C) \subset M_C^{-}.$$
In particular, $ |\Gamma ^-(\min (C))|\leq |M_C^-|.$ Similarly, $ |\Gamma (\max (C))|\leq |M_C^+|.$
Therefore,
\begin{align*}
\sum _{C\in {\mathcal C}} |M_C| &= \sum _{C\in {\mathcal C}}  |M_C^-|+|M_C^+|\\
&\ge \sum _{C\in {\mathcal C}}  |\Gamma ^-(\min (C))|+\sum _{C\in {\mathcal C}}  |\Gamma ^-(\min (C))|\\
&=2(|{\mathcal C}|-1)| {\mathcal C}|/ 2)=(|{\mathcal C}|-1)| {\mathcal C}|,
\end{align*}
since $\sum _{C\in {\mathcal C}}  |\Gamma ^-(\min (C))|=\sum _{C\in {\mathcal C}}  |\Gamma ^-(\min (C))|$ is the total number of arcs in the order relation, which is obviously $(|{\mathcal C}|-1)| {\mathcal C}|/ 2$.
 \end{proof}

We shall say that $C$ is {\em faithful} if $|M_C|\ge |C|.$  Faithful sets are  satisfactory for induction proofs, as we shall see in the next section. For a subset $X$ of an abelian group $G,$ we write $\pi (X)=\{x\in G : x+X=X\}.$ We recall that $|\pi(X)|$ is a divisor of $|X|.$

The next lemma gives conditions implying faithfulness.
\begin{lemma} Let $k$ be a prime and let $0\in A$ be a finite set of integers with $\gcd(A)=1.$ Let $C$ be a non $k$-semi-full
component of $A$ and let $C'\neq C$ be another $k$-component of $A.$ Then $|M_C|\ge |C'|.$
Moreover,
$C$ is faithful if one of the following conditions holds:
\begin{itemize}
  \item There is another component with size not less than $ |C|.$
  \item $C$ is  non $2$-full.
 \end{itemize}

\label{tl}
 \end{lemma}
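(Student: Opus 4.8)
The plan is to study the marginal set $M_C$ by separating the contribution of dilates $k\cdot T$ for components $T\neq C$ according to their position relative to $2\cdot C+k\cdot C$. Writing $I=[\min(2\cdot C),\max(2\cdot C)]+k\cdot C$ for the ``bulk'' block, the key observation is that for a component $T$ of $A$, the set $2\cdot C+k\cdot T$ lives in its own congruence class modulo $k$ (Lemma~\ref{bigcomp}), so $2\cdot C+k\cdot T$ and $2\cdot C+k\cdot C$ interact only through the cardinality estimate $|2\cdot C+k\cdot(C\cup T)|\ge |2\cdot C|+|C|+|T|-1$ inside the relevant residue class, or — when $T$ sits entirely to the left or the right of $2\cdot C+k\cdot C$ in the integer ordering — through the shift argument already used in Lemma~\ref{order}, namely $2\cdot\Gamma^-(\min C)+k\cdot\min C\subset M_C^-$ and the analogous statement for $M_C^+$. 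First I would record the base estimate: if $C$ is not $k$-semi-full then $c_{k^2}(C)\le k-1$, so by Proposition~\ref{basic} applied with $m=k^2$, $n=2$ (or by a direct component count) the block $2\cdot C+k\cdot C$ is ``short'' — it misses at least one full residue class modulo $k^2$ inside its span — and this deficiency is exactly what forces elements of $2\cdot C+k\cdot A$ outside $2\cdot C+k\cdot C$ once $A$ has another component $C'$.

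For the first bullet (another component $C'$ with $|C'|\ge|C|$): I would show that $2\cdot C+k\cdot C'$, which lies in the residue class of $k\cdot\min(C')$ modulo $k$, contributes at least $|C'|$ new elements to $M_C$. The point is that $2\cdot C+k\cdot C'$ has size $\ge |C|+|C'|-1\ge|C'|$ (indeed $\ge|C|$), but one must argue these are genuinely new, i.e.\ not already in $2\cdot C+k\cdot C$. Here the non-$k$-semi-fullness of $C$ enters: $2\cdot C+k\cdot C$ occupies only $c_k(C)\le ?$—more precisely, within each residue class modulo $k^2$ that it meets, and since $C$ misses some residue class modulo $k^2$, shifting $C'$ appropriately (using $0\in A$ and $\gcd(A)=1$ to guarantee the residues of $A$ modulo $k^2$ are not all in a single class, after the normalisations of Lemma~\ref{prehistorical}) produces a component $C'$ whose dilate lands in a residue class modulo $k^2$ avoided by $C$; then $2\cdot C+k\cdot C'$ is disjoint from $2\cdot C+k\cdot C$ and contributes its full $\ge|C'|\ge|C|$ elements. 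Combined with Lemma~\ref{order}'s geometric contribution this gives $|M_C|\ge|C|$. The statement $|M_C|\ge|C'|$ in general (without the size hypothesis) follows from the same disjointness-in-a-new-residue-class argument applied to whichever other component $C'$ one is handed, after possibly choosing the right representative.

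For the second bullet ($C$ non-$2$-full, i.e.\ $c_2(C)=1$, so $C$ is contained in a single residue class modulo $2$): here $2\cdot C$ lies in a single class modulo $4$, and the idea is that $A$ must contain an element in the other class modulo $2$ (since $\gcd(A)=1$ and $0\in A$ force $A$ to meet both parities, unless $A$ itself is all-even, which is excluded by $\gcd(A)=1$ after normalisation). Then there is a component $C'$—or just an element $a'$—of the opposite parity, and $2\cdot C+k\cdot a'$ is an arithmetic-progression-like translate that can be positioned to lie outside $2\cdot C+k\cdot C$; iterating over several such translates, or combining with Lemma~\ref{order}, yields $|M_C|\ge|C|$. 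The cleanest route is probably: $|M_C|\ge|M_C^-|+|M_C^+|\ge|\Gamma^-(\min C)|+|\Gamma(\max C)|$ from Lemma~\ref{order}'s proof, plus the extra contribution of at least $|C|-(|\Gamma^-(\min C)|+|\Gamma(\max C)|)$ elements coming from components whose dilates interleave with $2\cdot C+k\cdot C$, using the deficiency of $2\cdot C+k\cdot C$ modulo $k^2$ (non-semi-fullness) to guarantee room for them.

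The main obstacle I expect is the bookkeeping for components $T$ whose dilate $k\cdot T$ is \emph{interleaved} with $2\cdot C+k\cdot C$ rather than lying wholly to one side: for these one cannot simply use the shift-into-a-gap trick of Lemma~\ref{order}, and one must instead exploit that $2\cdot C+k\cdot C$, failing to be $k$-semi-full, leaves at least one residue class modulo $k^2$ empty in its convex hull, so that the corresponding slots are available to be filled by $2\cdot C+k\cdot T$ — and then argue that across all such $T$ these fillings, together with the left/right contributions, sum to at least $|C|$. Making this counting tight, and correctly handling the degenerate cases where $C$ has very few components or $A$ has only two components total (so $C'$ is essentially forced), is where the care is needed; the two bullet hypotheses are presumably exactly what rules out the bad extremal configurations.
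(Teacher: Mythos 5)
Your proposal circles around the right objects (the residues of $2\cdot C+k\cdot C'$ modulo $k^2$ and the deficiency $c_{k^2}(C)<k$), but the step you actually rely on is false as stated, and the step that would make it work is missing. You claim that for the given component $C'$ one can ``shift $C'$ appropriately'' or ``choose the right representative'' so that $k\cdot C'$ lands in a residue class modulo $k^2$ avoided by $C$, making the whole set $2\cdot C+k\cdot C'$ disjoint from $2\cdot C+k\cdot C$. There is nothing to shift: $C'$ is a fixed $k$-component, $k\cdot C'$ occupies the single residue $kc'$ modulo $k^2$ (where $c'\equiv C'\pmod k$), and modulo $k^2$ the two sets in question are the translates $\phi(2\cdot C)+kc$ and $\phi(2\cdot C)+kc'$ of one and the same set $\phi(2\cdot C)$, where $\phi:\Z\to\Z/k^2\Z$; in general these translates do intersect, so full disjointness simply fails. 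What is true, and what the paper proves, is only that the two translates cannot be \emph{equal}, and this is exactly where primality of $k$ and non-semi-fullness enter: $|\phi(C)|=c_{k^2}(C)<k$, the period $\pi(\phi(C))$ is a subgroup of $\Z/k^2\Z$ whose order divides both $k^2$ and $|\phi(C)|$, hence is trivial, so $\phi(2\cdot C)+kc=\phi(2\cdot C)+kc'$ would force $kc\equiv kc'\pmod{k^2}$, i.e. $C=C'$. Non-equality then yields one residue class modulo $k^2$ met by $2\cdot C+k\cdot C'$ but missed by $2\cdot C+k\cdot C$; the elements of $2\cdot C+k\cdot C'$ in that class form $2\cdot C_0+k\cdot C'$ for some $k^2$-component $C_0$ of $C$, giving $|M_C|\ge|C_0|+|C'|-1\ge|C'|$. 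Your proposal never supplies this stabilizer argument, which is the crux of the first assertion, and without it the ``disjointness in a new class'' you invoke has no justification.

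The second bullet in your write-up is likewise not yet an argument: you say $2\cdot C+k\cdot a'$ ``can be positioned to lie outside'' $2\cdot C+k\cdot C$, but nothing can be positioned — what does the job is a one-line parity observation. Since $c_2(C)=1$ and $k$ is odd, every element of $2\cdot C+k\cdot C$ has the parity of the elements of $C$, while for $v\in A$ of the opposite parity (such a $v$ exists because $0\in A$ gives an even element and $\gcd(A)=1$ gives an odd one) every element of $2\cdot C+kv$ has the parity of $v$; hence $2\cdot C+kv\subset M_C$ and $|M_C|\ge |2\cdot C+kv|=|C|$, with a single translate sufficing. No appeal to Lemma~\ref{order}, to interleaved components, to intervals or convex hulls is needed anywhere in this lemma — the paper's proof is purely about congruences, and importing the ordering estimates of Lemma~\ref{order} here only invites double counting. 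Finally, the first bullet requires no extra work at all: it is immediate from the first assertion, since $|M_C|\ge|C'|\ge|C|$.
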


  \begin{proof}
Let $\phi:\mathbb{Z}\rightarrow \mathbb{Z}/k^2\mathbb{Z}$ be the projection.
Since  $|\pi (C)|$ divides $k^2$ and
$|\phi (C)|$ and since $|\phi (C)|<k,$ we have necessarily $\pi (\phi (C))=\{0\}.$ Assuming  that $\phi(2\cdot C+k\cdot C)=\phi(2\cdot C+k\cdot C')$,
we have  $\phi(k\cdot C)=\phi(k\cdot C'),$ and hence $C\equiv C'$ modulo $k.$
It follows that $C=C',$ a contradiction.

Thus, $M _{C}$ contains  $2\cdot C_0+k\cdot C',$ where $C_0$ is some $k^2$-component of $C$, and
hence $|M _{C}|\ge |C'|.$ Assume now that $C$ is non-$2$-full. Take an odd element $v\in A.$ Clearly, $2\cdot C+k v$
is a $C$-marginal set.\end{proof}

 \section{Large sets of integers}
 The next result is our first step.

\begin{theorem} If $A$ is a finite set of integers, then
$$|2\cdot A+k\cdot A|\ge (k+2)|A|-4k^{k-1}.$$
\label{main0}
 \end{theorem}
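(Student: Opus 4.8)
The plan is to argue by induction on $|A|$, peeling off a single $k$-component. By the affine-invariance Lemma~\ref{prehistorical} we may normalize so that $0\in A$ and $\gcd(A)=1$. If $A$ is $k$-semi-full, then Corollary~\ref{subfull1} gives $|2\cdot A+k\cdot A|\ge (k+2)|A|-2kc_k(A)\ge (k+2)|A|-2k\cdot k\ge (k+2)|A|-4k^{k-1}$ (using $k\ge 3$), so we are done in that case; likewise the base cases of small $|A|$ are handled crudely, since for $|A|\le 4k^{k-1}$ the claimed bound is weaker than the trivial $|2\cdot A+k\cdot A|\ge |2\cdot A|\ge |A|$ — more carefully one wants $|2\cdot A+k\cdot A|\ge (k+2)|A|-k^2$ type slack from Corollary~\ref{nathanson} and Proposition~\ref{basic}, but in any event the constant $4k^{k-1}$ is generous enough to absorb all base cases. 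So assume $|A|$ is large and $A$ is not $k$-semi-full.

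The inductive step: since $A$ is not $k$-semi-full, it has a $k$-component $C$ with $c_{k^2}(C)<k$, i.e. a non $k$-semi-full component. Write $A=C\sqcup A'$ where $A'=A\setminus C$. By Lemma~\ref{bigcomp} (applied with $m=k$, $n=2$), the set $2\cdot A+k\cdot A$ decomposes modulo $k$, and in particular
\begin{align*}
2\cdot A+k\cdot A\ \supseteq\ (2\cdot A'+k\cdot A')\ \sqcup\ (2\cdot C+k\cdot A),
\end{align*}
the two pieces lying in disjoint unions of residue classes mod $k$ (the classes hit by $k\cdot A'$ versus the class $\equiv 0$ hit by... — more precisely one separates the components carefully). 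Hence
\begin{align*}
|2\cdot A+k\cdot A|\ \ge\ |2\cdot A'+k\cdot A'|\ +\ |2\cdot C+k\cdot C|\ +\ |M_C|,
\end{align*}
where $M_C=(2\cdot C+k\cdot A)\setminus(2\cdot C+k\cdot C)$ is the marginal set. Now apply the induction hypothesis to $A'$ (noting $|A'|<|A|$, and that the bound is affine/dilation invariant so renormalizing $A'$ is harmless), giving $|2\cdot A'+k\cdot A'|\ge (k+2)|A'|-4k^{k-1}$. For the component $C$ itself use the trivial $|2\cdot C+k\cdot C|\ge |C|$. It then remains to show
\begin{align*}
(k+2)|A'|-4k^{k-1}+|C|+|M_C|\ \ge\ (k+2)|A|-4k^{k-1},
\end{align*}
i.e. $|M_C|\ge (k+2)|C|-|C|=(k+1)|C|$. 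That is far too strong to expect from a single component; the fix is that one should not give away $(k+2)$ per element on $C$ in the first place. Instead, treat $C$ as contributing its own dilate-sum bound $|2\cdot C+k\cdot C|$ which, after renormalizing $C$ to make it $k^2$-structured, is itself $\ge$ something like $4|C|-4$ by Corollary~\ref{nathanson} (since $\gcd$ considerations give at least two relevant component-counts), and one iterates the decomposition down through the $k^2$-, $k^3$-, \dots, $k^{k-1}$-levels. The marginal sets $M_C$ accumulate the ``missing'' linear term: by Lemma~\ref{order}, summed over the $k$-components $\mathcal C$ of $A$ one has $\sum_{C}|M_C|\ge(|\mathcal C|-1)|\mathcal C|$, and by Lemma~\ref{tl} a component is \emph{faithful}, $|M_C|\ge|C|$, whenever it is non-$2$-full or is not the unique largest component — which covers all but at most one component, of controlled size.

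The clean way to organize the induction is therefore: at each level $j$ ($1\le j\le k-1$) consider the decomposition of $A$ into $k^j$-components; a component is ``good'' at this level if it is either $k^{j}$-semi-full (handled by Corollary~\ref{subfull1}) or faithful (handled by Lemma~\ref{tl}, contributing an extra $|C|$ via $M_C$ to make up the deficit between the trivial bound and $(k+2)|C|$), and the only components that can fail both tests are $2$-full-but-not-semi-full ones that are the unique largest at their level; recursing into those drops a factor of $k$ in the index each time, so after at most $k-1$ steps the residual exceptional set has size $O(k^{k-1})$ and contributes at most an additive $4k^{k-1}$ loss. \textbf{The main obstacle} will be bookkeeping this recursion so that the total accumulated loss across all $\le k-1$ levels, each potentially spawning $O(k)$ exceptional sub-components, telescopes to the single constant $4k^{k-1}$ rather than something exponentially larger; this requires using that at each level at most \emph{one} sub-branch is exceptional (the unique-largest, $2$-full one), so the recursion is a path, not a tree, and the losses add rather than multiply. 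The routine estimates ($k\ge 3$, comparing $2k\,c_k(A)$ with $4k^{k-1}$, handling $|A|$ small) are straightforward and I would dispatch them quickly.
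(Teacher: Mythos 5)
Your proposal contains several of the right ingredients (decomposition modulo $k$, marginal sets, faithfulness via Lemma~\ref{tl}, the semi-full case via Corollary~\ref{subfull1}), but it is missing the one idea that makes the argument close, and the replacement bookkeeping you sketch does not work. As you yourself observe, peeling off one component and inducting on $|A|$ would require $|M_C|\ge (k+1)|C|$, which is hopeless: the marginal set of a component only ever buys you about $|C|$, i.e.\ \emph{one} extra unit in the coefficient of $|C|$, never the jump from a trivial bound to $(k+2)|C|$. The paper's mechanism for exploiting this is a strengthened statement proved by induction on an auxiliary parameter $s$: for all $2\le s\le k$, $|2\cdot A+k\cdot A|\ge (s+2)|A|-4k^{s-1}$. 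The base $s=2$ is Corollary~\ref{nathanson}; at each step one decomposes into $k$-components, applies the level-$(s-1)$ bound to every non-full component, Corollary~\ref{subfull1} to the full ones, and uses $\sum_{i\in E}|M_i|\ge\sum_{i\in E}|A_i|$ to upgrade the coefficient by exactly one. Your sketch never formulates this family of intermediate bounds, and without it the ``extra $|C|$ from $M_C$'' has nowhere to accumulate.

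The alternative recursion you propose (descending through $k^2$-, $k^3$-, \dots components, claiming the exceptional branch is a path so losses add, and that after $k-1$ steps the residual set has size $O(k^{k-1})$) is not correct as stated. First, recursing into a component does not shrink it by a factor of $k$ — a single $k$-component can contain almost all of $A$ — so there is no reason the descent stops after $k-1$ levels with a small residual; in the paper the number of levels is bounded by $k-2$ simply because the target coefficient $k+2$ is reached by gaining $+1$ per level starting from $4$. Second, the recursion genuinely branches: the lower-level bound is applied to \emph{all} non-full components (up to $k$ of them), which is exactly why the error multiplies by $k$ per level and the constant is the exponential $4k^{k-1}$; your ``path, not tree, losses add'' accounting would predict a polynomial-in-$k$ constant and does not reflect a valid argument. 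Finally, you give no mechanism for the genuinely exceptional case in which the marginal sets of the non-full components are too small: the paper shows (via a derangement argument and Lemma~\ref{tl}) that then there is a unique such component $A_e$, which is $2$-full and largest, and recovers the deficit with the cross term $|2\cdot A_f+k\cdot A_e|\ge 2|A_e|+k|A_f|-2k$ from Proposition~\ref{basic}, a device absent from your proposal.
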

\begin{proof} Without loss of generality, we may take $0\in A$ and $\gcd(A)=1$. 
We shall prove by induction that for all $2\le s\le k,$ we have
$$|2\cdot A+k\cdot A|\ge (s+2)|A|-4k^{s-1}.$$
For $s=2$, this bound is weaker than the one obtained by Corollary~\ref{nathanson}.

Assume now that $2 < s \leq k$ and that the result holds for $s-1$. Take a $k$-decomposition of $A$, namely  $A=\bigcup _{i\in I} A_i$. We shall write $M_i=M_{A_{i}}.$ Put $F=\{i\in I : A_i \text{ is} \  k-\text{full}  \}$
and $E=I\setminus F.$ Notice that $|E|+|F|=|I|\le k.$ 

Assume first that  $$\sum _{i\in E} |M_i|\ge \sum _{i\in E} |A_i|.$$

By Lemma~\ref{bigcomp},
 $2\cdot A_i+m\cdot A$ and $2\cdot A_j+m\cdot A$ belong to different congruence classes modulo $k,$ for $i\neq j.$  By Corollary~\ref{subfull1}, for every $i\in F$, $|2\cdot A_i+k\cdot A_i|\ge (k+2)|A_i|-2k$.  Using the last relations,
the induction hypothesis applied for every $i\in E,$ we have 
\begin{align*}
|2\cdot A+k\cdot A|\ge&\,
\sum _{i\in E} (|2\cdot A_i+k\cdot A_i|+|M_{i}|)+
\sum _{i\in F} |2\cdot A_i+k\cdot A_i|\\
\ge & \sum _{i\in E} ((1+s)|A_i|-4k^{s-2})+ \sum _{i\in E} |A_i|+\sum _{i\in F} ((k+2)|A_i|-2k)\\
\ge&\, \sum _{i\in I} (s+2)|A_i|-4|I|k^{s-2}+2k|F|\left(2k^{s-3}-1\right)\\
\ge & (s+2)|A|-4k^{s-1},
\end{align*}
and the result holds.

Assume now that $$\sum _{i\in E} |M_i|< \sum _{i\in E} |A_i|.$$ In particular, we have $|E|\ge 1.$ We must have $|E|=1,$
otherwise we take a derangement (permutation without fixed element) $\sigma$ of $E.$ By Lemma~\ref{tl}, $|M_i|\ge |A_{\sigma (i)}|,$  for every $i.$ We get a contradiction by summing. Put $E=\{e\}$ and take $f\in F.$ By Lemma~\ref{tl}, $A_e$ is $2$-full,  $|A_e|> |A_f|$  and $|M_e|\ge |A_f|.$
 By Lemma \ref{basic}, $|2\cdot A_f+k\cdot A_e|\ge 2|A_e|+k|A_f|-2k$.

The idea here is to estimate the component of $2\cdot A+k\cdot A$ containing $2\cdot A_f+k\cdot A_f$, using the sum  $2\cdot A_f+k\cdot A_e.$ Using Lemma~\ref{bigcomp} we have,
\begin{align*}
|2\cdot A+k\cdot A|\ge&
|2\cdot A_e+k\cdot A_e|+|M_{e}|+|2\cdot A_f+k\cdot A_e|+
\sum _{i\in F\setminus f} |2\cdot A_i+k\cdot A_i|\\
\ge& (1+s)|A_e|-4k^{s-2}+|A_f|+k|A_f|+2|A_e|-2k+ \sum _{i\in F\setminus f}  ((k+2)|A_i|-2k)\\
>& (s+2)|A_e|-4k^{s-2}+ \sum _{i\in F}  ((s+2)|A_i|-2k)\\
\ge&(s+2)|A|-4k^{s-1}+2|F|k(2k^{s-3}-1)\\
\ge&(s+2)|A|-4k^{s-1}.
\end{align*}
\end{proof}
Our main result is the following one:
\begin{theorem} Let $A$ be a finite set of integers with
$0\in A,$  $\gcd(A)=1$ and   $|A|>8k^{k}.$
 If $A$ has a $k$-component involving at most $k-1$  distinct $k^2$-components, then
$$|2\cdot A+k\cdot A|> (k+2)|A|.$$
\label{main1}
 \end{theorem}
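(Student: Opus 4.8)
The plan is to decompose $A$ modulo $k$, estimate each block together with its marginal set, and reduce the whole statement to a single lower bound on the total marginal mass; the heart of the matter will be a degenerate case in which one huge block has to supply almost all of that mass.

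Since $0\in A$ and $\gcd(A)=1$, $A$ cannot lie inside one residue class modulo $k$ (that class would be $k\mathbb Z$, forcing $k\mid\gcd(A)$), so $c_k(A)\ge 2$. Write $A=\bigcup_{i\in I}A_i$ for the decomposition modulo $k$ and put $M_i=M_{A_i}$. By Lemma~\ref{bigcomp} the sets $2\cdot A_i+k\cdot A$ are pairwise incongruent modulo $k$, so
$$|2\cdot A+k\cdot A|=\sum_{i\in I}\bigl(|2\cdot A_i+k\cdot A_i|+|M_i|\bigr).$$
Let $F\subseteq I$ be the $k$-semi-full blocks and $E=I\setminus F$; the distinguished block $C$ lies in $E$. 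For $i\in F$, transforming $A_i$ affinely into a $k$-full set (Lemmas~\ref{prefull}, \ref{prehistorical}) and invoking Corollary~\ref{subfull1} gives $|2\cdot A_i+k\cdot A_i|\ge(k+2)|A_i|-2k$; for $i\in E$, Theorem~\ref{main0} gives $|2\cdot A_i+k\cdot A_i|\ge(k+2)|A_i|-4k^{k-1}$. Thus it is enough to show
$$\sum_{i\in I}|M_i|>2k|F|+4|E|k^{k-1}.$$

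Now I would feed in Lemma~\ref{tl}: for $i\in E$ and every $j\ne i$ one has $|M_i|\ge|A_j|$, and in fact $|M_i|\ge|A_i|$ unless $A_i$ is simultaneously $2$-full and (strictly) the largest block of $A$. The arithmetic then runs on the observation that, as $|A|>8k^k$ and $|I|\le k$, the largest block has more than $8k^{k-1}$ elements, which comfortably beats the deficit $4k^{k-1}$ charged to a single block. If $|E|\ge 2$ one applies $|M_i|\ge|A_j|$ along a derangement of $E$ (treating separately the case where the largest block lies in $E$, where one of its partners already has an enormous marginal); if $E=\{i_0\}$ with $C=A_{i_0}$, then either $\max_{j\ne i_0}|A_j|>2k|F|+4k^{k-1}$, and Lemma~\ref{tl} finishes it, or all other blocks are small and $|C|>2k^k$, in which case $C$ faithful gives $|M_C|\ge|C|>2k^k$ and we are done.

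The residual case — which I expect to be the real obstacle — is $E=\{i_0\}$ with $C$ both $2$-full and strictly largest, every other block $k$-semi-full and of size $o(|C|)$, and $|C|>2k^k$; here neither faithfulness nor a single partner block helps, and one must mine $M_C$ using the $k^2$-structure of $C$ guaranteed by the hypothesis. Write $C=\bigsqcup_{\ell=1}^{t}C_\ell$, $t=c_{k^2}(C)\le k-1$. For each other block $A_j$ the translates $2\cdot C_\ell+k\cdot A_j$ lie in $t$ pairwise distinct classes modulo $k^2$, while $2\cdot C+k\cdot C$ occupies only the $t$ classes carrying the diagonal pieces $2\cdot C_\ell+k\cdot C$; these two families of $t$ classes differ by the translation $k(r_j-r_C)\ne 0$, and a set of at most $k-1$ residues cannot be invariant under a nonzero translation because $k$ is prime, so at least one $C_\ell$ has $2\cdot C_\ell+k\cdot A_j$ sitting in a class disjoint from $2\cdot C+k\cdot C$, hence entirely inside $M_C$ (this is precisely the inclusion exploited in the proof of Lemma~\ref{tl}). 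Making this quantitative is the delicate point: one has to choose the block $A_j$ (equivalently the shift $r_j-r_C$) and superpose the contributions of the several available blocks so that the $k^2$-components of $C$ falling entirely into $M_C$ carry more than $2k|F|+4k^{k-1}$ elements — essentially, one must capture a fixed positive proportion of $|C|$, which should follow from the facts that the largest $k^2$-component of $C$ has size at least $|C|/(k-1)$ and that the family of $k^2$-components absorbed by every admissible shift is strictly smaller than the full family. Once this counting is in place, the slack in $|A|>8k^k$ swallows the additive constants and the proof is complete.
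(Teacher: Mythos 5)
Your reduction to $\sum_{i\in I}|M_i|>2k|F|+4|E|k^{k-1}$ and the easy cases are in the spirit of the paper, but the proof is not complete: the ``residual case'' you single out is exactly the case the theorem is about, and your strategy for it does not work as described. The inclusion you extract from the proof of Lemma~\ref{tl} only says that for each admissible shift $d=r_j-r_C$ \emph{some} $k^2$-component $C_\ell$ of $C$ satisfies $2\cdot C_\ell+k\cdot A_j\subseteq M_C$; it gives no control over \emph{which} one. If $t=c_{k^2}(C)=k-1$, the set $\{2s_1,\dots,2s_t\}$ of residues misses a single class $a$, and the unique escaping component for the shift $d$ is the one with $2s_\ell=a-d$ --- which may be a singleton while the other $k^2$-components carry essentially all of $|C|$. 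If in addition $c_k(A)=2$ there is only one shift available, so no superposition over $j$ is possible, and $|M_C|$ can a priori stay bounded by the size of the small blocks. So the claim that one ``must capture a fixed positive proportion of $|C|$'' inside $M_C$ is precisely the missing (and, via this route, unobtainable) step.

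The paper escapes this trap by a different mechanism: in the hard case it does not try to enlarge $M_C$ at all. Having reduced to the situation where $C$ is the unique non-$k$-semi-full component, is $2$-full, and is the largest component (so $|C|>8k^{k-1}$), it picks a semi-full component $T$ and lower-bounds the congruence class of $2\cdot T$ by $|2\cdot T+k\cdot A|\ge |2\cdot T+k\cdot C|$, which via Proposition~\ref{basic} (using $c_2(C)=2$) exceeds the budget $(k+2)|T|-2k$ allotted to $T$'s class by a surplus of order $|C|$; that surplus alone swallows the $4k^{k-1}$ deficit and the $2k$ per remaining class. Two further remarks: first, the paper charges the $4k^{k-1}$ penalty only once, by applying Theorem~\ref{main0} to $A\setminus C$ as a whole rather than block by block; your version charges $4k^{k-1}$ per block of $E$, which makes the arithmetic tighter and leaves your $|E|\ge 2$ derangement argument with unclosed borderline subcases (e.g.\ $|E|=2$ with the largest block in $E$, where the second marginal is only bounded below by the size of a possibly tiny block). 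Second, your opening observation that $c_k(A)\ge 2$ is correct but not actually needed for the argument.
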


 \begin{proof}
 Let ${\mathcal C}$ denote the set of $k$-components of $A$ and let $C$ a non $k$-semi-full component of $A.$
 Take a component $N$ with a maximal cardinality. Clearly $|N|>8k^{k-1}.$ Assume first that $|M_C|\ge |N|.$
 By Theorem~\ref{main0} and Lemma~\ref{bigcomp} we have
 \begin{align*}
|2\cdot A+k\cdot A|\ge&\,
|2\cdot C+k\cdot C|+|M_{C}|+|2\cdot (A\setminus C)+k\cdot (A\setminus C)|\\
>& (k+2)|C|-4k^{k-1}+8k^{k-1}+(k+2)|A\setminus C|-4k^{k-1}\\
=&\,(k+2)|A|,
\end{align*}

Assume now that $|M_C|\ge |N|$ and put $c=c_k(A).$ By Lemma \ref{tl}, $C=N$ and $C$ is $2$-full. Also we may assume that $C$ is the unique
non $k$-semi-full component of $A.$ Take now a  $k$-semi-full component of $A,$ say $T$ and put $A'=A\setminus (C\cup T).$
By Lemma \ref{basic}, $|2\cdot T+k\cdot C|\ge 2|C|+k|T|-2k$.

 Thus using Corollary~\ref{subfull1} and  Lemma~\ref{bigcomp} we have
\begin{align*}
|2\cdot A+k\cdot A|\ge&\,
|2\cdot C+k\cdot C|+|T|+|2\cdot T+k\cdot C|+(k+2)|A'|-2(c-2)k
\\
\ge & (k+2)|C|-4k^{k-1}+|C|+(k+2)|T|-2k+ (k+2)|A'|-2(c-2)k\\
>&(k+2)|A|+4k^{k-1}-2k(k-1)>(k+2)|A|,
\end{align*}
and the result holds.
 \end{proof}

We can now prove the  following lower bound on $|2\cdot A+k\cdot A|$:

\begin{corollary} If $A$ is a finite set of integers with
$|A|>8k^{k},$ then
$$|2\cdot A+k\cdot A|\ge (k+2)|A|-k^2-k+2.$$
\label{main2}
 \end{corollary}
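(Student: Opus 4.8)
The plan is to reduce to Theorem~\ref{main1} by an affine normalization followed by a case analysis on whether $A$ (after normalization) has a $k$-component meeting at most $k-1$ distinct $k^2$-components, i.e.\ a non-$k$-semi-full component. First I would invoke Lemma~\ref{prehistorical}: translating $A$ so that $0\in A$ and dividing by $\gcd(A)$ changes neither $|A|$ nor $|2\cdot A+k\cdot A|$, so we may assume $0\in A$ and $\gcd(A)=1$ throughout. Note $\gcd(A)=1$ forces $c_k(A)\ge 2$ whenever $|A|>1$ (otherwise $k\mid\gcd(A)$), which we will use implicitly.

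The main dichotomy is: either $A$ has a non-$k$-semi-full $k$-component, or $A$ is $k$-semi-full. In the first case, Theorem~\ref{main1} applies directly (its hypotheses $0\in A$, $\gcd(A)=1$, $|A|>8k^k$ are all met, and ``has a $k$-component involving at most $k-1$ distinct $k^2$-components'' is exactly the negation of $k$-semi-fullness of some component), giving $|2\cdot A+k\cdot A|>(k+2)|A|$, which is strictly stronger than the claimed bound $(k+2)|A|-k^2-k+2$ since $k^2+k-2>0$. In the second case $A$ is $k$-semi-full, and I would want to push through the argument of Corollary~\ref{subfull1}: take a $k$-decomposition $A=\bigcup_{i\in I}A_i$; each $A_i$ is, by Lemma~\ref{prefull} applied after an affine transform (Lemma~\ref{prehistorical}), turned into a $k$-full set, so $|2\cdot A_i+k\cdot A_i|\ge (k+2)|A_i|-2k$; and by Lemma~\ref{bigcomp} the sums $2\cdot A_i+k\cdot A_i$ lie in distinct residue classes mod $k$, so they are disjoint and we sum. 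This yields $|2\cdot A+k\cdot A|\ge (k+2)|A|-2k\,c_k(A)$. The difficulty is that $c_k(A)$ can be as large as $k$, giving only $(k+2)|A|-2k^2$, which is worse than the target $-k^2-k+2$ for $k\ge 3$.

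So the semi-full case needs one extra idea to gain back a factor of roughly two in the loss term, and I expect this to be the main obstacle. The natural move is to not decompose $A$ all the way down to $k$-components, but to keep two of them glued together: pick the two largest $k$-components $C_1,C_2$ and bound $|2\cdot(C_1\cup C_2)+k\cdot(C_1\cup C_2)|$ more carefully using the marginal-set machinery of Section~4 and Lemma~\ref{order} — the point being that the marginal contributions $|M_C|$ recovered via $\Gamma$ account for the ``$-1$'' deficits that pile up into the $-2k\,c_k(A)$ term. Concretely, one wants an inequality of the shape $|2\cdot A+k\cdot A|\ge (k+2)|A|-2k\,c_k(A)+\big(\text{something}\ge (k-2)k\big)$ coming from $\sum_C|M_C|\ge (c_k(A)-1)c_k(A)$ of Lemma~\ref{order}, after checking that these marginal elements are not already counted. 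Then $-2k\,c_k(A)+(c_k(A)-1)c_k(A)\ge -k^2-k+2$ would need to hold for $2\le c_k(A)\le k$, which one checks by noting the left side, as a function of $c=c_k(A)$, is minimized at the endpoints of $[2,k]$ and equals $-k^2-k+2$ at $c=k$ (and is larger at $c=2$). If instead one prefers to avoid re-deriving marginal bounds here, an alternative is: since $A$ is $k$-semi-full and $|A|>8k^k$, some $k$-component $C$ has $|C|\ge |A|/k>8k^{k-1}$; split off $C$ and apply Theorem~\ref{main0} to $A\setminus C$ together with a marginal estimate $|M_C|\ge$ (size of another component) from Lemma~\ref{tl}'s first clause, mirroring the first displayed computation in the proof of Theorem~\ref{main1}. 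Either route reduces the whole corollary to an arithmetic inequality in $k$ and $c_k(A)$; verifying that inequality at $c_k(A)=k$ is exactly where the constant $k^2+k-2$ comes from, and matches the arithmetic-progression example $|2\cdot P+k\cdot P|=(k+2)|P|-2k$ being far from tight for the worst structured sets.
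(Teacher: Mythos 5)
Your overall route coincides with the paper's: normalize with Lemma~\ref{prehistorical}, dispose of the case of a non-$k$-semi-full component by Theorem~\ref{main1}, and in the $k$-semi-full case combine Corollary~\ref{subfull1} on each $k$-component with the marginal-set bound $\sum_{C}|M_C|\ge c(c-1)$ of Lemma~\ref{order} (the double-counting worry you flag is harmless: $M_C\subseteq 2\cdot C+k\cdot A$ lies in the residue class of $2\cdot C$ modulo $k$, so Lemma~\ref{bigcomp} separates it from the sums attached to other components, and it is disjoint from $2\cdot C+k\cdot C$ by definition). However, the closing arithmetic contains a genuine error at exactly the point you rely on. Writing $c=c_k(A)$, the combined bound is $(k+2)|A|-2kc+c(c-1)=(k+2)|A|-c(2k+1-c)$, and $c(2k+1-c)$ is increasing on $[2,k]$; at $c=k$ it equals $k(k+1)=k^2+k$, not $k^2+k-2$. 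So your claimed inequality $-2kc+c(c-1)\ge -k^2-k+2$ fails (by $2$) precisely at $c=k$, and the argument as written does not reach the stated constant; the constant in fact comes from $c=k-1$, where $(k-1)(k+2)=k^2+k-2$.

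The repair is the paper's case split, which you skipped: when $A$ is $k$-full ($c=k$), do not use the marginal sets at all --- the first part of Corollary~\ref{subfull1} already gives $(k+2)|A|-2k$, and $2k\le k^2+k-2$ for $k\ge 3$; reserve the Lemma~\ref{order} computation for the non-$k$-full, $k$-semi-full case, where $c\le k-1$ and the worst case is exactly $k^2+k-2$. Your fallback alternative (split off a largest component $C$, apply Theorem~\ref{main0} to $A\setminus C$, and invoke the first clause of Lemma~\ref{tl} for a marginal gain) does not work here: that clause of Lemma~\ref{tl} requires $C$ to be non-$k$-semi-full, which is unavailable in the semi-full case, and the $4k^{k-1}$ loss from Theorem~\ref{main0} is far too large to be offset by a marginal term of the size of a single component; absorbing such a loss is the mechanism of Theorem~\ref{main1}, which needs $|M_C|\ge |N|>8k^{k-1}$ and is not at your disposal in this branch.
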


 \begin{proof}
  By Lemma \ref{prehistorical}, we may take $0\in A$ and $\gcd(A)=1$. By Corollary~\ref{subfull1}, the result holds if $A$ is $k$-full. Assume that $A$ is non $k$-full and let ${\mathcal C}$ denote the set of $k$-components of $A.$
Put $j= | {\mathcal C}|.$
 By Theorem \ref{main1}, we may assume that $A$ is $k$-semi-full. By Corollary \ref{subfull1} and  Lemma \ref{bigcomp}, we have
 \begin{align*}
|2\cdot A+k\cdot A|=&
\sum _{C\in {\mathcal C}}|2\cdot C+k\cdot C|+|M_{C}|\\
\ge& \sum _{C\in {\mathcal C}}((k+2)|C|-2k)+j(j-1)\\
=&(k+2)|A|-j(2k-j+1)
\end{align*}
Therefore  $|2\cdot A+k\cdot A|\ge (k+2)|A|-k^2-k+2$, and the result holds.
 \end{proof}


\bibliography{bukh}
\bibliographystyle{abbrv}










%

\end{document}